\let\csname equation*\endcsname\relax
\let\csname endequation*\endcsname\relax
\definecolor{hrefcolor}{rgb}{0.0,0.5,0.8}
\definecolor{hlgreen}{rgb}{0,0.7,0}
\newcommand{\field}[1]{\mathbb{#1}}
\newcommand{\R}{\field{R}}
\newcommand{\iprod}[2]{\langle #1,#2\rangle}
\newcommand{\defeq}{:=}
\DeclareMathOperator*{\argmin}{arg\,min}
\newcommand{\norm}[1]{\|#1\|}
\newcommand{\grad}[1]{\nabla #1}
\newcommand{\freevar}{\,\boldsymbol\cdot\,}
\newtheorem{proposition}{Proposition}
\theoremstyle{definition}
\newtheorem*{assumption*}{Assumption}
\newtheorem{remark}{Remark}
\newtheorem*{remark*}{Remark}
\newtheorem*{definition*}{Definition}
\numberwithin{equation}{section}
\numberwithin{lemma}{section}
\numberwithin{theorem}{section}
\numberwithin{proposition}{section}
\numberwithin{definition}{section}
\numberwithin{remark}{section}
\numberwithin{example}{section}
\numberwithin{assumption}{section}
\numberwithin{algorithm}{section}
\numberwithin{corollary}{section}
\newlength{\w}
\newcommand*{\centerfloat}{%
  \parindent \z@
  \leftskip \z@ \@plus 1fil \@minus \textwidth
  \rightskip\leftskip
  \parfillskip \z@skip}
\begin{document}

\title[Explorations on anisotropic regularisation of dynamic inverse problems]{Explorations on anisotropic regularisation of dynamic inverse problems by bilevel optimisation}

\author{
    Martin Benning$^1$,
    Carola-Bibiane Schönlieb$^1$,
    Tuomo Valkonen$^1$,
    Verner Vla\v{c}i\'{c}$^1$
    }

\address{$^1$ Department of Applied Mathematics and Theoretical Physics, University of Cambridge, United Kingdom}

\begin{abstract}
 We explore anisotropic regularisation methods in the spirit of \cite{holler2013infimal}. Based on ground truth data, we propose a bilevel optimisation strategy to compute the optimal regularisation parameters of such a model for the application of video denoising. The optimisation poses a challenge in itself, as the dependency on one of the regularisation parameters is non-linear such that the standard existence and convergence theory does not apply. Moreover, we analyse numerical results of the proposed parameter learning strategy based on three exemplary video sequences and discuss the impact of these results on the actual modelling of dynamic inverse problems.
\end{abstract}


\maketitle

\section{Introduction}


%
%
%

In this paper, we employ bilevel optimisation to explore different choices of spatial- and temporal regularisation in a variational image reconstruction model. 

Variational regularisation methods are extremely popular and versatile tools when it comes to computing approximate solutions to ill-posed inverse problems. Given the assumption of normal distributed noise, they usually have the form of a generalised Tikhonov-type regularisation, i.e.
\begin{align}\label{eq:varmod}
u_{\bm{\alpha}} \in R(\bm{\alpha}) := \argmin_{u \in L^2(\Omega) \cap \mathcal{U}} \left\{ \frac{1}{2} \| K u - g \|_2^2 + G(u; \bm{\alpha}) \right\} \, \text{,}
\end{align}
where $K \in \mathcal{L}(L^2(\Omega), L^2(\Sigma))$ is a bounded, linear operator mapping from the Hilbert space $L^2(\Omega)$ to the Hilbert space $L^(\Sigma)$, with $\Omega$ and $\Sigma$ being bounded and connected domains. The function $g \in L^2(\Sigma)$ represents the given measurement data, and the norm $\| \cdot \|_2$ simply denotes the $L^2(\Sigma)$-norm. Given a signal $u \in \mathcal{U}$ and regularisation parameters $\bm{\alpha} \in \mathcal{V}$, the functional $G:\mathcal{U} \times \mathcal{V} \rightarrow \mathbb{R}$ represents the regulariser, for Banach spaces $\mathcal{U}$ and $\mathcal{V}$.

Particularly in imaging and image processing applications, proper, lower semi-continuous (l.s.c), convex and non-smooth regularisers have attracted a lot of attention over the last two decades. Various types of total variation regularisation \cite{rudin1992nonlinear} and $\ell^1$ regularisation of unitary transformed signals \cite{donoho2006compressed} have been proposed, in order to exploit sparsity of a signal with respect to a given representation. 

Despite allowing for significant improvements in terms of reconstruction quality, non-smooth regularisation methods suffer from introducing systematic modelling artefacts like any other regularisation method; in case of total variation regularisation for instance, the regularisation method is well-known to introduce piecewise constant approximations of noisy, non-constant regions, which is known as the stair-casing effect (cf. \cite[Section 4.2]{burger2013guide}).

In order to compensate for modelling artefacts, the concept of infimal convolutions can be used for combining the advantages of different regularisers into one. The infimal convolution of two proper, l.s.c. and convex functionals $J_1$ and $J_2$ is defined as
\begin{align}
(J_1 \square J_2)(u) := \inf_{u = v + w} J_1(v) + J_2(w) \, \text{.}\tag{IC}
\end{align}
In \cite{chambolle1997image, benning2013higher, benning2011singular, muller2013advanced} it has been shown that an infimal convolution of the total variation and a higher-order total variation is beneficial for fighting the stair-casing phenomenon, and that infimal convolutions in general are useful in order to reconstruct functions that are additive compositions of functions which can individually be recovered by different regularisers.

Recently, infimal convolution has been considered as a suitable model for handling dynamic inverse problems \cite{holler2013infimal}. Holler and Kunisch have proposed to use infimal convolution in order to combine regularisation functionals that are suitable for either spatial or temporal regularisation, in order to create an appropriate spatio-temporal regulariser. In particular, their model involves a regulariser 
$G$ in \eqref{eq:varmod} which constitutes an infimal-convolution of total variation functionals of weighted spatial and temporal derivatives. This will be specified in Section \ref{sec:dynreg}. The use of an infimal convolution between dominantly spatial and dominantly temporal regularisation terms not only allows the reconstruction of a regularised dynamic image sequence, but also the decomposition of the latter into a sequence of images encoding dominantly temporal information and another sequence encoding dominantly spatial information. 

In what follows we want to learn optimal decompositions in this regularisation in the context of video de-noising by an appropriate bilevel optimisation approach. In the context of \eqref{eq:varmod} the associated bilevel learning problem we will discuss looks for an optimal parameter vector $\bm{\alpha}$ that solves for some convex, proper, weak* lower semicontinuous cost functional $F: X \to \mathbb R$ the problem
\begin{equation}
    \label{eq:learn}
    \tag{$\mathrm{P}$}
    \min_{\alpha \in \mathcal{P}} F(u_{\bm{\alpha}})
\end{equation}
subject to $u_{\bm\alpha}$ being a solution of \eqref{eq:varmod}.

In the context of computer vision and image processing bilevel optimisation is considered as a supervised learning method that optimally adapts itself to a given dataset of measurements and desirable solutions. In \cite{roth2005fields,tappen2007utilizing,domke2012generic,chen2014}, for instance the authors consider bilevel optimization for finite dimensional Markov random field models. In inverse problems the optimal inversion and experimental acquisition setup is discussed in the context of optimal model design in works by Haber, Horesh and Tenorio \cite{haber2003learning,haber2010numerical}, as well as Ghattas et al. \cite{bui2008model,biegler2011large}.  Recently parameter learning in the context of functional variational regularisation models \eqref{eq:varmod} also entered the image processing community with works by the authors \cite{de2013image,calatronidynamic,reyes2015a,reyes2015b,lucainfimal,chungdelosreyes}, Kunisch, Pock and co-workers \cite{kunisch2013bilevel,Chen2012,ochs_ssvm2015}, Chung et al. \cite{chung2014optimal}, Hinterm\"uller et al. \cite{hintermuller2014bilevel} and others \cite{baus2014fully,schmidt2014shrinkage,FNSW15}. Interesting recent works also include bilevel learning approaches for image segmentation \cite{Ranftl_GCPR2014}, learning and optimal setup of support vector machines \cite{Klatzer2015} and learning discrete reaction-diffusion filters \cite{chen_cvpr2015}. 

What we show is closest in flavour to recent applications of bilevel optimisation to supervised learning of optimal parameters in a total variation type regularisation model \cite{delosreyes2014learning,kunisch2013bilevel,tuomov-interior,tuomov-tgvlearn}. The main difference in the theory and computational realisation to these works and the model discussed in this paper is due to the nonlinear dependence of the lower level problem on the parameter vector $\bm\alpha$ that the model is optimised for. To our knowledge, this is the first publication that deals with learning of non-linear regularisation parameters in the context of regularisation of inverse problems.


The paper is organised as follows. First, we are going to recall the concept of spatio-temporal regularisation via infimal convolutions of regularisers. Then, we are going to present the bilevel optimisation framework for learning the regularisation parameters of the infimal convolution regularisers. Subsequently, we are going to address the numerical aspects of the parameter learning strategy. We then conclude with numerical examples and their discussion. We particularly want to address the question of how realistic the assumption of the coupling of the spatial and temporal regularisation is, given three different types of images sequences.

\section{Regularisation of dynamic inverse problems}\label{sec:dynreg}
Let $g=(g_1,\ldots, g_m)\in L^1(\Omega; \R^m)$. We denote by $\norm{\freevar}_{2,1}$ the $L^1(\Omega)$-norm of the two-norm of vector-valued functions, namely
\[
    \norm{g}_{2,1} \defeq \int_\Omega \norm{g(x)}_2 \, dx,
\]
where $\norm{g(x)}_2 = \sqrt{g_1(x)^2+\ldots + g_m(x)^2}$. Based on \cite{holler2013infimal}, we introduce the anisotropic derivative $\nabla_\kappa$ and its negative adjoint $\mathrm{div}_\kappa$ defined for a scalar $\kappa \in (0, 1)$ as
\begin{equation}
\begin{aligned}
\nabla_\kappa & =\left(\kappa \frac{\partial}{\partial x},\kappa \frac{\partial}{\partial y}, (1-\kappa) \frac{\partial}{\partial t} \right), \quad \text{and} \\
\mathrm{div}_\kappa & =\kappa \frac{\partial}{\partial x}+\kappa \frac{\partial}{\partial y} + (1-\kappa) \frac{\partial}{\partial t},
\end{aligned}\label{eq:kappagraddiv}
\end{equation}

\noindent With these, and for $u \in W^{1, 1}(\Omega)$ we define the following dynamic regularisation functionals:
\begin{table}[ht]
\begin{align}
 G\left(u;\alpha_1,\alpha_2,\kappa\right) &= \inf_{u=v+w} \alpha_1\|\nabla_\kappa v\|_{2,1}+\alpha_2\|\nabla_{1-\kappa}w\|_{2,1}\tag{IC-TVTV}\label{eq:ictvtv}\\
G\left(u;\alpha_1,\alpha_2,\kappa\right) &= \inf_{u=v+w} \frac{\alpha_1}{2}\|\nabla_\kappa v\|_2^2+\alpha_2\|\nabla_{1-\kappa}w\|_{2,1}\tag{IC-$L^2$TV}\label{eq:icl2tv}\\
G\left(u;\alpha_1,\alpha_2\right) &= \alpha_1\|\nabla u\|_{2,1}+\alpha_2\|\frac{\partial}{\partial t}u\|_{1} \tag{Rigid TVTV}\label{eq:rigidtvtv}\\
G\left(u;\alpha_1,\alpha_2\right) &= \frac{\alpha_1}{2}\|\nabla u\|_2^2+\alpha_2\|\frac{\partial}{\partial t}u\|_{1} \tag{Rigid $L^2$TV}\label{eq:rigidl2tv}
\end{align}
\caption{The different dynamic regularisers used throughout this paper.}
\label{tab:dynreg}
\end{table}
Here \eqref{eq:ictvtv} is a direct adaptation of the ICTV functional proposed in \cite{holler2013infimal}. Regulariser \eqref{eq:icl2tv} is a modification that allows for different regularisation models; in case we have picked two rather complementary regularisation functionals, with the L$^2$ norm of the gradient complementing the total variation regularisation. Regularisers \eqref{eq:rigidtvtv} and \eqref{eq:rigidl2tv} can almost be seen as limiting cases of \eqref{eq:ictvtv} and \eqref{eq:icl2tv}, respectively. Choosing $\kappa \in \{0, 1\}$ and restricting solutions to $v = w$ converts \eqref{eq:ictvtv} and \eqref{eq:icl2tv} into \eqref{eq:rigidtvtv} and \eqref{eq:rigidl2tv}.

The basic motivation for these models is a decomposition of a dynamic image sequence into a spatial and a temporal component, such that these components are penalised individually with suitable regularisation functionals. However, in order to allow for space-time correspondence in these sequences, an additional anisotropy parameter $\kappa$ is introduced that ensures neither one of the components to be penalised by the spatial or the temporal regulariser alone. Speaking of spatial and temporal components, a spatial component is regularised in space only, whereas the temporal component is regularised in only in time. In \eqref{eq:kappagraddiv} this corresponds to the extreme cases $\kappa = 0$ (only temporal regularisation) and $\kappa = 1$ (only spatial regularisation). The restriction to $\kappa \in (0, 1)$, which further ensures spatial and temporal regularity in both components, also guarantees well-posedness of \eqref{eq:varmod}.

A major challenge for successful regularisation of dynamic image sequences via any of the regularisers listed in Table \ref{tab:dynreg} is the 'optimal' choice of parameters $\alpha_1$, $\alpha_2$ and $\kappa$. On the one hand, this is due to the number of parameters itself, making it difficult to employ heuristic parameter choice rules that may succeed in case of single parameters. On the other hand, the dependency of the model on the regularisation parameter $\kappa$ is non-linear, making it even harder to optimise for.

Following \cite{delosreyes2014learning,kunisch2013bilevel,tuomov-interior,tuomov-tgvlearn}, we propose a learning framework that allows to learn the regularisation parameters from training data. As mentioned before, the key difference is that the dependency of $\kappa$ is non-linear. 


\section{Optimising the parameters---the theory}

It is not immediately clear, which parameter choices for $\alpha_1$, $\alpha_2$ and $\kappa$ may be optimal for dynamic inverse problems regularised by one of the anisotropic regularisers given in Table \ref{tab:dynreg}. Parameter learning approaches, as discussed in the introduction, provide a means towards studying optimal parameter choices with respect to a known ground truth. We concentrate in particular on the bilevel optimisation framework, first presented in \cite{delosreyes2014learning,kunisch2013bilevel} and further studied in the context of multi-parameter regularisers in \cite{tuomov-interior,tuomov-tgvlearn}. The general form therein is given as
\begin{equation}
    \min_{\bm{\alpha} \in \mathcal{P}}~ \frac{1}{2}\norm{R(\bm{\alpha})-g}_2^2 \quad\text{s.t.}\quad R(\bm{\alpha})=\argmin_{u \in L^1(\Omega)}~ \frac{1}{2}\|f-Ku\|_2^2+ G\left(u;\bm{\alpha}\right).
    \label{eq:bilevopt}
\end{equation}
Here $\bm{\alpha}$ denotes the vector of parameters we are optimising for, and $\mathcal{P}$ is our space of allowed parameters; generally $\bm{\alpha}=(\alpha_1,\alpha_2, \kappa)$ with $\mathcal{P}= [0, \infty)^2 \times [0, 1]$ for the infimal convolution regularisers, and $\bm{\alpha}=(\alpha_1, \alpha_2)$ with $\mathcal{P}= [0, \infty)^2$  for the rigid regularisers. Here, $\norm{R(\bm{\alpha})-g}_2$ is the cost functional $F(R(\bm\alpha))$ measuring the distance of the denoised solution $R(\bm{\alpha})$ from the ground truth original $g$, and $G$ is a regulariser for the lower level problem of reconstructing $u$ from noisy $f$. 


\subsection{Existence of solutions}
A general existence and convergence theory for solutions $\bm{\alpha}$ to \eqref{eq:bilevopt} is presented in \cite{tuomov-interior} when $G$ is linear in $\bm{\alpha}$ and defined over the space of bounded variation functions, i.e. $\| \nabla u \|_{2, 1}$ in Table \ref{tab:dynreg} is replaced by 
\begin{align*}
\sup_{\substack{\varphi \in C_0^\infty(\Omega; \R^m)\\ \| \varphi \|_{2, \infty} \leq 1}} \int_\Omega u(x) \, (\mathrm{div}_\kappa \varphi)(x) \, dx,
\end{align*}
where the notation $\| \cdot \|_{2, \infty}$ is analogous to $\| \cdot \|_{2, 1}$, but with the $L^\infty(\Omega)$-norm replacing the $L^1(\Omega)$-norm. Our regularisers are not linear in $\kappa$ however, so an extended theory would be needed. With an additional elliptic regularisation, however, we can still show existence of solutions easily.

\begin{proposition}
    \label{prop:existence}
    Consider the problem 
    \begin{equation}
        \min_{\bm{\alpha} \in \mathcal{P}}~ \frac{1}{2}\norm{R(\bm{\alpha})-g}_2^2 \quad\text{s.t.}\quad R(\bm{\alpha})=\argmin_{u \in L^1(\Omega)}~ \frac{1}{2}\|f-Ku\|_2^2+ G\left(u;\bm{\alpha}\right) + \frac{\epsilon}{2}\norm{\grad u}^2,
        \label{eq:bilevopt-smooth}
    \end{equation}
    where $G$ is one of the regularisers from Table \ref{tab:dynreg}, $\mathcal{P}$ the corresponding admissible set of parameters, and $\epsilon>0$.
    Suppose constant functions are not in the null space of $K$.
    Then there exists an optimal solution $\bm{\alpha} \in \mathcal{P}$ to \eqref{eq:bilevopt-smooth}.
\end{proposition}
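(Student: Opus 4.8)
The plan is to apply the direct method of the calculus of variations to \eqref{eq:bilevopt-smooth}, using the elliptic term $\tfrac{\epsilon}{2}\norm{\grad u}^2$ to recover the compactness that the $\BVspace$-type regularisers of Table~\ref{tab:dynreg} do not supply on their own. I would first note that the lower-level objective equals $+\infty$ unless $u\in H^1(\Omega)$, so that problem is effectively posed over $H^1(\Omega)$.

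\textbf{Step 1: the lower level is well posed, with a $\bm\alpha$-uniform bound.} For fixed $\bm\alpha\in\mathcal P$ I would check that $J(u;\bm\alpha):=\tfrac12\norm{f-Ku}_2^2+G(u;\bm\alpha)+\tfrac{\epsilon}{2}\norm{\grad u}^2$ is proper, convex and weakly lower semicontinuous on $H^1(\Omega)$. Since $G\ge 0$ and, by the assumption that constants are not in $\ker K$, the quadratic part $\tfrac12\norm{f-Ku}_2^2+\tfrac{\epsilon}{2}\norm{\grad u}^2$ is strictly convex and coercive on $H^1(\Omega)$ (Poincaré--Wirtinger on the zero-mean component, the fidelity term on the constants), the minimiser $R(\bm\alpha)$ exists and is unique. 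Testing against $u=0$ gives $J(R(\bm\alpha);\bm\alpha)\le\tfrac12\norm f_2^2$, hence $\tfrac{\epsilon}{2}\norm{\grad R(\bm\alpha)}^2\le\tfrac12\norm f_2^2$, and with the same coercivity estimate a bound $\norm{R(\bm\alpha)}_{H^1(\Omega)}\le C$ \emph{independent} of $\bm\alpha$.

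\textbf{Step 2: compactness along a minimising sequence and passage to the limit.} I would take $(\bm\alpha^k)\subset\mathcal P$ along which $F(R(\bm\alpha^k))$ tends to the infimum. The components $\kappa^k\in[0,1]$ have a convergent subsequence $\kappa^k\to\kappa^*\in[0,1]$; by Step~1 the $u_k:=R(\bm\alpha^k)$ are bounded in $H^1(\Omega)$, so along a further subsequence $u_k\rightharpoonup u^*$ in $H^1(\Omega)$ and, by Rellich's theorem, $u_k\to u^*$ strongly in $L^2(\Omega)$; hence $F(u_k)=\tfrac12\norm{u_k-g}_2^2\to\tfrac12\norm{u^*-g}_2^2$, which therefore equals the infimum. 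It then remains to exhibit $\bm\alpha^*\in\mathcal P$ with $R(\bm\alpha^*)=u^*$. If $(\alpha_1^k,\alpha_2^k)$ has a bounded subsequence I may assume $\alpha_i^k\to\alpha_i^*\in[0,\infty)$ and set $\bm\alpha^*:=(\alpha_1^*,\alpha_2^*,\kappa^*)\in\mathcal P$; then, combining optimality of $u_k$ for $J(\cdot;\bm\alpha^k)$, lower semicontinuity of $J$ jointly in $(u,\bm\alpha)$ along $u_k\rightharpoonup u^*$, $\bm\alpha^k\to\bm\alpha^*$, and continuity of $\bm\alpha\mapsto J(u;\bm\alpha)$ for each fixed $u$, I would obtain $J(u^*;\bm\alpha^*)\le J(u;\bm\alpha^*)$ for all $u\in H^1(\Omega)$, so that $u^*=R(\bm\alpha^*)$ by the uniqueness from Step~1.

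\textbf{The main obstacles.} Two points of Step~2 are not routine. The first is the joint lower semicontinuity of $G$, because the dependence on $\kappa$ is \emph{nonlinear} (it enters through $\nabla_\kappa$ and $\nabla_{1-\kappa}$) and, for \eqref{eq:ictvtv}--\eqref{eq:icl2tv}, $G$ is an infimal convolution: I would choose almost-optimal splittings $u_k=v_k+w_k$, normalise their means (which leaves $\norm{\nabla_{\kappa^k}v_k}_{2,1}$ and $\norm{\nabla_{1-\kappa^k}w_k}_{2,1}$ unchanged), extract weak limits of $v_k,w_k$ from the bounds those terms provide, let $\kappa^k\to\kappa^*$ inside the anisotropic gradients, and invoke weak lower semicontinuity of $\norm{\freevar}_{2,1}$ and $\norm{\freevar}_2^2$ --- the elliptic term again being what pins the decomposition down in $H^1$. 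The second, and the one I expect to require the most care, is that $\mathcal P=[0,\infty)^2\times[0,1]$ is \emph{not compact} in the $\alpha_1$- and $\alpha_2$-directions, so the case $\alpha_1^k\to\infty$ (or $\alpha_2^k\to\infty$) must be ruled out. Here the key remark is that then the corresponding seminorm of $R(\bm\alpha^k)$ is squeezed to zero, so $R(\bm\alpha^k)$ converges to the minimiser of the remaining energy under a linear constraint (such as $\nabla_{\kappa^*}v=0$, or $\grad u=0$); one then argues either that this limit cannot beat the value already attainable at finite parameters, so such sequences may be discarded, or equivalently that the infimum over $\mathcal P$ coincides with the infimum over $\mathcal P$ intersected with a large fixed box, to which the preceding argument applies verbatim.
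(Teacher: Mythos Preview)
Your approach is essentially the paper's: the direct method, with the elliptic term $\tfrac{\epsilon}{2}\norm{\grad u}^2$ supplying $H^1$-coercivity for the inner problem and the joint lower semicontinuity of $G$ carrying the outer limit. A few differences are worth noting. First, you obtain strong $L^2$-convergence of $u_k=R(\bm\alpha^k)$ via the uniform $H^1$-bound and Rellich; the paper instead invokes a ``strict convergence'' argument (weak convergence plus convergence of norms) for the same conclusion, which is less direct and, as written, somewhat circular since it presupposes $\norm{u^k-g}_2^2\to\norm{\hat u-g}_2^2$ before $\hat u$ has been shown feasible. Your route is cleaner here. Second, you explicitly flag the two genuine difficulties---the joint lower semicontinuity of $G$ in $(u,\bm\alpha)$ through the nonlinear $\kappa$-dependence and the infimal-convolution structure, and the non-compactness of $\mathcal P$ in the $(\alpha_1,\alpha_2)$-directions---whereas the paper handles the first by a one-line assertion and simply writes ``Suppose also $\bm\alpha^k\to\hat{\bm\alpha}$'' for the second, without justification. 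So your proposal is in fact more complete than the paper's own argument; the sketch you give for the unbounded-$\alpha_i$ case (reducing to a constrained limit problem and comparing values) is the right idea, though carrying it out rigorously---in particular showing the infimum over $\mathcal P$ equals that over a large box---would still require some work that neither you nor the paper supplies.
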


\begin{proof}
    Note that all regularisers presented in Table \ref{tab:dynreg} are lower semi-continuous with respect to the mutual convergence of $\bm{\alpha}$ in $\R^3$ and of $u$ in $L^1$. They are moreover continuous with respect to $\bm{\alpha}$ for fixed $u$.

\noindent Let us first consider the inner problem
    \begin{equation}
        \label{eq:inner-smooth}
        \argmin_{u \in L^1(\Omega)}~ J(u; \bm{\alpha}) \defeq \frac{1}{2}\|f-Ku\|_2^2+ G\left(u;\bm{\alpha}\right) + \frac{\epsilon}{2}\norm{\grad u}^2
    \end{equation}
first. The term $\frac{\epsilon}{2}\norm{\grad u}^2$ and the fact that constant functions are not in the kernel of $K$ guarantees weak convergence in $H^1(\Omega)$ of a (subsequence of a) minimising sequence $\{u^k\}$ regardless of the choice of $\bm{\alpha}$ in the inner problem.
    This implies the convergence in $L^1$, and consequently by the standard argument of calculus of variations, an existence of a solution $R(\bm{\alpha})$ to the inner problem.

    For a minimising sequence $\{\bm{\alpha}^k\}$ of the whole problem \eqref{eq:bilevopt-smooth} we therefore get weak convergence in $L^2$ of $u^k \defeq R(\bm{\alpha}^k)$ to some $\hat u$.
    But since $\norm{u^k-g}_2^2 \to \norm{u-g}_2^2$ for such a minimising sequence, we have so-called strict convergence of $u^k-g$ to $u-g$. In $L^2$ this implies strong convergence of first $u^k-g$ to $u-g$, and then of $u^k$ to $\hat u$. 
    Suppose also $\bm{\alpha}^k \to \hat{\bm{\alpha}}$.
    We then compute
    \[
        \begin{split}
        J(u; \hat{\bm\alpha})
        &
        =
        \liminf_{k \to \infty} \left( J(u; {\bm\alpha}^k) + G(u; \hat{\bm\alpha}) - G(u; {\bm\alpha}^k) \right)
        \\
        &
        \ge
     \liminf_{k \to \infty} J(u; {\bm\alpha}^k) + \liminf_{k \to \infty} \left( G(u; \hat{\bm\alpha}) - G(u; {\bm\alpha}^k) \right).
        \end{split}
    \]
    Using the continuity of $G$ with respect to $\bm{\alpha}$, we therefore obtain
    \[
        J(u; \hat{\bm\alpha})
        =
        \liminf_{k \to \infty} J(u; {\bm\alpha}^k)
        \ge
        \liminf_{k \to \infty} J(u^k; {\bm\alpha}^k)
        \ge
        J(\hat u; \hat{\bm\alpha}).
    \]
    This shows that $\hat u=R(\hat{\bm\alpha})$, and hence that $\hat{\bm\alpha}$ solves \eqref{eq:bilevopt-smooth}.
\end{proof}

\begin{remark}
    \label{rem:epsilon-alpha}
    Note that we only used $\epsilon$ to show existence of solutions to the inner problem. In particular, we do not require $\epsilon>0$ to be constant, but we can allow $\epsilon=\epsilon(\bm{\alpha}$) to vary continuously with respect to $\bm{\alpha}$.
\end{remark}

\subsection{Derivative of the solution map}

In order to use standard optimisation methods, such as BFGS, to minimise \eqref{eq:bilevopt}
we need to calculate a gradient of the solution map $R$; equivalently, following the PDE approach of \cite{delosreyes2014learning}, we need to solve a so-called adjoint equation. 
A solution is given by the classical implicit function theorem \cite[Theorem 4.E]{zeidler2012applied}, and in particular the version in \cite[Corollary 4.34]{mordukhovich2006variational} with relaxed assumptions.
Let us suppose $G$ is differentiable, such that $R(\bm{\alpha})$ is given as the solution $u=u_{\bm{\alpha}}$ to
\[
    0 = S(u, \bm{\alpha}) \defeq K^*(Ku-f) + \grad_u G(u; \bm{\alpha}).
\]
Then, if $S$ is strictly differentiable, and $\grad_u S(u, \bm{\alpha})$ is invertible, we have
\begin{equation}
    \label{eq:solmapderiv}
    \grad R(\bm{\alpha})
    =
    [\grad_u S(u, \bm{\alpha})]^{-1} \grad_{\bm{\alpha}} S(u, \bm{\alpha}).
\end{equation}
The strict differentiability of $S$ may be achieved by a ``second-degree'' Huber regularisation \cite{de2011optimal}. The invertibility of $\grad_u S(u, \bm{\alpha})$ can be guaranteed by an additional elliptic regularisation term $\frac{\epsilon}{2}\norm{\grad u}_2^2$.
Both extra regularisations are the same as already employed in \cite{delosreyes2014learning}, where an alternative adjoint equation route was taken to avoid direct construction of $\grad R$. We now take a closer look at the derivatives of the solution maps for the regularisers \eqref{eq:ictvtv} and \eqref{eq:icl2tv}. Furthermore, as we are going to restrict ourselves to the regularisation of dynamic video sequences, we consider $K$ to be the identity operator mapping from $L^2(\Omega)$ to $L^2(\Omega)$ throughout the remainder of this paper.

\subsection{Derivative of the IC TVTV solution map}
\label{sec:ictvtv-theory}

To use the formula \eqref{eq:solmapderiv}, we need the derivative of the map
\begin{equation}
    \notag
    R(\alpha_1,\alpha_2,\kappa)=P_1 \hat R(\alpha_1, \alpha_2, \kappa)
\end{equation}
for $P_1(u, w) \defeq u$, and
\[
    \hat R(\alpha_1, \alpha_2, \kappa)
    \defeq
    \argmin_{(u,w)}~ \frac{1}{2}\|u-g\|_2^2+\alpha_1 \|\nabla_\kappa(u-w)\|_{2,1}+\alpha_2\|\nabla_{1-\kappa}(w)\|_{2,1}.
\]
Clearly
\begin{equation}
    \label{eq:r-hat-r-grad}
    \grad R(\alpha_1,\alpha_2,\kappa)=P_1 \grad \hat R(\alpha_1,\alpha_2,\kappa)
\end{equation}
if the latter derivative exists.
For $\hat R(\alpha_1, \alpha_2, \kappa)$ to have a unique minimiser and to be differentiable, we further replace $\hat R(\alpha_1, \alpha_2, \kappa)$ with
\begin{equation}
    \label{eq:tvtvsolmap-reg}
    \hat R_{\gamma,\epsilon}(\alpha_1,\alpha_2,\kappa)=\argmin_{(u, w)}~ J_{\gamma,\epsilon}(u,w,\alpha_1,\alpha_2,\kappa)
\end{equation}
for
\[
    J_{\gamma,\epsilon}(u,w,\alpha_1,\alpha_2,\kappa)
    \defeq
    \frac{1}{2}\|u-g\|_2^2+\alpha_1 \Theta(\nabla_\kappa(u-w))
    +\alpha_2\Theta(\nabla_{1-\kappa}(w))
    +\frac{1}{2} \left(\int_\Omega w \,dx\right)^2.
\]
Here we use the notation
\begin{equation}
    \label{eq:theta}
    \Theta(v) :=\|v\|_\gamma+\frac{\epsilon}{2}\|v\|_2^2
\end{equation}
for the sum of the Huber-regularised $1$-norm $\| v \|_\gamma = \int_\Omega H_\gamma(\| v(x) \|_2) \, dx$ with
\begin{align*}
H_\gamma(r) = \begin{cases} | r | - \frac{\gamma}{2} & | r | \geq \gamma \\ \frac{1}{2\gamma} | r |^2 & | r | < \gamma \end{cases}
\end{align*}
and parameter $\gamma>0$, and additional Hilbert space regularisation with parameter $\epsilon>0$.
The last term of \eqref{eq:tvtvsolmap-reg} eliminates the translational invariance in $\R$ with respect to $w$, thus forcing unique solutions as needed for the application of \eqref{eq:solmapderiv}.

Note that existence of a solution $(u, w)$ to \eqref{eq:tvtvsolmap-reg} is guaranteed by simple application of Proposition \ref{prop:existence} and Remark \ref{rem:epsilon-alpha} provided $\kappa \not\in \{0,1\}$. In that case $\Theta(\grad_\kappa u) \ge \epsilon'\norm{\grad u}_2^2$ for some $\epsilon'=\epsilon'(\kappa,\epsilon)$.

For the following propositions we define the shorthand notations
\begin{align}
    \notag
    \Psi^1_\kappa(u) & \defeq [\grad \Theta](\grad_\kappa(u-w)),
    &
    \Psi^2_\kappa(u) & \defeq [\grad^2 \Theta](\grad_\kappa(u-w))    
    \\
    \intertext{and}
    \notag
    Q &  \defeq \left(\begin{smallmatrix}I & -I \\ 0 & I\end{smallmatrix}\right),
\end{align}
where we denote by $I$ the identity operator. We also model by $c \defeq \chi_\Omega$ the term $\int_\Omega w \, dx = \iprod{c}{w}$  in \eqref{eq:tvtvsolmap-reg}.

Now we can derive the derivatives of the solution map.

\begin{proposition}[Derivative of the IC TVTV solution map]
    Let $\hat R_{\gamma,\epsilon}$ be defined by \eqref{eq:tvtvsolmap-reg} for some $\gamma,\epsilon>0$, and
    \[
            R(\alpha_1,\alpha_2,\kappa) \defeq P_1 \hat R_{\gamma,\epsilon}(\alpha_1, \alpha_2, \kappa).
    \]
    Then
    \begin{subequations}
    \begin{equation}
        \grad R(\alpha_1,\alpha_2,\kappa)
        = -P_1 \left(\frac{\partial S}{\partial(u,w)}\right)^{-1}\frac{\partial S}{\partial(\alpha_1,\alpha_2,\kappa)},
    \end{equation}
    where $S=\grad_{(u,w)} J_{\gamma,\epsilon}$ satisfies
    \label{eq:tvtvderivative}
    {\footnotesize
    \begin{align}
    \frac{\partial S}{\partial(u,w)}& \defeq Q^*\begin{pmatrix}I- \alpha_1 \mathrm{div}_\kappa \Psi^2_\kappa (u-w) \nabla_\kappa & I\\ I & I-\alpha_2 \mathrm{div}_{1-\kappa} \Psi^2_{1-\kappa}(w) \nabla_{1-\kappa}+c \otimes c \end{pmatrix}Q, \label{eq:tvtvderivativematrix}
    \\
        \frac{\partial S}{\partial(\alpha_1,\alpha_2,\kappa)}& \defeq Q^*
        \begin{pmatrix}-\mathrm{div}_\kappa \Psi^1_\kappa(u-w) & 0 & T_1 
        \\
        0 & -\mathrm{div}_{1-\kappa} \Psi^1_{1-\kappa}(w) & T_2
    \end{pmatrix},
    \label{eq:tvtvderivativevector}
    \end{align}
    }
    where
    {\footnotesize
    \begin{align}
        T_1 & \defeq -\alpha_1\left(\mathrm{div}-\frac{\partial}{\partial t^*}\right)\Psi^1_\kappa(u-w) - \alpha_1 \mathrm{div}_\kappa \Psi^2_\kappa(u-w) \left(\nabla-\frac{\partial}{\partial t}\right)(u-w), 
        \quad\text{and}\\
        T_2 & \defeq
      -\alpha_2\left(\frac{\partial}{\partial t^*}-\mathrm{div}\right)\Psi^1_{1-\kappa}(w) - \alpha_2 \mathrm{div}_{1-\kappa} \Psi^2_{1-\kappa}(w) \left(\frac{\partial}{\partial t}-\nabla\right)w.
    \end{align}
    }
    \end{subequations}
\end{proposition}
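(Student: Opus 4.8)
The plan is to apply the formula \eqref{eq:solmapderiv} with the change of variables that makes the infimal convolution explicit. First I would record that the substitution $(u,w) \mapsto (u-w, w)$, encoded by the matrix $Q$, turns the regulariser $\alpha_1\Theta(\nabla_\kappa(u-w)) + \alpha_2\Theta(\nabla_{1-\kappa}w)$ into a sum of terms each depending on a single block variable; the clean product structure of the Hessian in the new coordinates is what produces the outer $Q^*(\cdots)Q$ sandwich. Concretely, writing $J_{\gamma,\epsilon}$ in the variables $(p,w) = Q(u,w)$, I would compute the first variation to obtain $S = \grad_{(u,w)}J_{\gamma,\epsilon} = Q^* \tilde S(Q(u,w))$ where $\tilde S$ has components $p\mapsto p + \alpha_1(-\mathrm{div}_\kappa)\Psi^1_\kappa$ (the $u-g$ data term contributes $u = P_1(u,w)$, which in the new variables is $p+w$, hence the off-diagonal $I$'s) and $w \mapsto \alpha_2(-\mathrm{div}_{1-\kappa})\Psi^1_{1-\kappa}(w) + \iprod{c}{w}c$ from the last term of \eqref{eq:tvtvsolmap-reg}. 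Here I am using the chain rule $\grad[\Theta\circ\nabla_\kappa] = -\mathrm{div}_\kappa \circ [\grad\Theta]\circ\nabla_\kappa$, valid since $\mathrm{div}_\kappa = -\nabla_\kappa^*$ by \eqref{eq:kappagraddiv} and the Huber–Hilbert regularisation makes $\Theta$ continuously differentiable.

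Next I would differentiate $S$ once more. Since $S = Q^*\tilde S(Q \cdot)$ and $Q$ is a constant linear operator (independent of $\bm\alpha$), we get $\grad_{(u,w)}S = Q^* [\grad \tilde S](Q(u,w)) Q$, which is exactly \eqref{eq:tvtvderivativematrix} once one computes $\grad\tilde S$: differentiating $-\mathrm{div}_\kappa\Psi^1_\kappa$ in its argument $p$ gives $-\mathrm{div}_\kappa \Psi^2_\kappa \nabla_\kappa$ by the chain rule with $\grad\Psi^1_\kappa = \Psi^2_\kappa = [\grad^2\Theta](\cdot)$, the $I$ from the quadratic data term sits on the diagonal and the cross-terms, and $c\otimes c$ is the Hessian of $\tfrac12\iprod{c}{w}^2$. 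For the $\bm\alpha$-derivative \eqref{eq:tvtvderivativevector}, the $\alpha_1$ and $\alpha_2$ directions are immediate — differentiating a term linear in $\alpha_i$ just strips the coefficient, giving $-\mathrm{div}_\kappa\Psi^1_\kappa(u-w)$ and $-\mathrm{div}_{1-\kappa}\Psi^1_{1-\kappa}(w)$ respectively, with zeros in the off positions because each penalty sees only one $\alpha_i$.

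The genuinely fiddly part, and the step I expect to be the main obstacle, is the $\kappa$-derivative producing $T_1$ and $T_2$. Here $\kappa$ enters $S$ through the operators $\nabla_\kappa$, $\mathrm{div}_\kappa$ and $\nabla_{1-\kappa}$, $\mathrm{div}_{1-\kappa}$, each of which is affine in $\kappa$: from \eqref{eq:kappagraddiv} one reads off $\partial_\kappa \nabla_\kappa = (\partial_x,\partial_y,-\partial_t) = \nabla - \tfrac{\partial}{\partial t}$ and similarly $\partial_\kappa \mathrm{div}_\kappa = \mathrm{div} - \tfrac{\partial}{\partial t^*}$ (with the adjoint bookkeeping flipping the sign appropriately), while $\partial_\kappa(1-\kappa) = -1$ flips the sign for the $w$-block, explaining why $T_2$ has $\tfrac{\partial}{\partial t^*}-\mathrm{div}$ and $\tfrac{\partial}{\partial t}-\nabla$. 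Then I apply the product rule to $-\mathrm{div}_\kappa \Psi^1_\kappa(u-w)$ in $\kappa$: one term differentiates the outer $\mathrm{div}_\kappa$, giving $-(\mathrm{div}-\tfrac{\partial}{\partial t^*})\Psi^1_\kappa(u-w)$; the other differentiates $\Psi^1_\kappa(u-w) = [\grad\Theta](\nabla_\kappa(u-w))$ through its argument, giving $-\mathrm{div}_\kappa \Psi^2_\kappa(u-w)(\nabla-\tfrac{\partial}{\partial t})(u-w)$; multiplying by the coefficient $\alpha_1$ yields $T_1$ exactly, and the $w$-block computation is identical up to the sign flip yielding $T_2$. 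Throughout, the only subtlety is carefully tracking adjoints (the $\tfrac{\partial}{\partial t^*}$ notation) and confirming that $Q$ genuinely does not depend on $\kappa$ so that it passes untouched through both differentiations; modulo that, everything is an application of the chain and product rules, and differentiability of the composed map is inherited from the smoothness of $\Theta$ (which is $C^1$ with locally Lipschitz, hence a.e. differentiable, second derivative — one uses the $\gamma$-Huber smoothing to get $\grad^2\Theta$ in the requisite sense) together with the invertibility of $\grad_u S$ guaranteed by the elliptic lower bound $\Theta(\nabla_\kappa u) \ge \epsilon'\norm{\grad u}_2^2$ noted after \eqref{eq:tvtvsolmap-reg}.
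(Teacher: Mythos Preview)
Your proposal is correct and follows essentially the same route as the paper: write down the Euler--Lagrange system $S=\grad_{(u,w)}J_{\gamma,\epsilon}=0$, invoke the implicit function theorem formula \eqref{eq:solmapderiv}, and compute the required partial derivatives. The paper's own proof is in fact much terser---it simply records $S_1,S_2$ in the original $(u,w)$ variables and then says ``calculating all the partial derivatives\ldots we obtain \eqref{eq:tvtvderivative}''---whereas you have actually carried out those calculations, in particular the product-rule derivation of $T_1,T_2$ from $\partial_\kappa\nabla_\kappa=\nabla-\tfrac{\partial}{\partial t}$, which the paper leaves entirely to the reader.
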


The problem of finding the derivative of the solution map thus reduces to the problem of solving three linear systems with the operator \eqref{eq:tvtvderivativematrix} for the right-hand side vectors the columns of \eqref{eq:tvtvderivativevector}.

\begin{proof}
The optimality conditions for the solution $u=R_{\gamma,\epsilon}(\alpha_1,\alpha_2,\kappa)$ are obtained from its Euler-Lagrange equation $S(u, w, \alpha_1,\alpha_2,\kappa)=0$, which we split into
\begin{subequations}
\begin{align}
S_1& \defeq u-f-\alpha_1 \mathrm{div}_\kappa \Psi^1_\kappa(u-w)=0, \quad\text{and}\\
S_2& \defeq \alpha_1 \mathrm{div}_\kappa \Psi^1_\kappa(u-w)-\alpha_2 \mathrm{div}_{1-\kappa} \Psi^1_{1-\kappa}(w) +\int_\Omega w \,dx=0\label{eq:tvtvS2_definition}
\end{align}
\end{subequations}
Now $(S_1,S_2)=0$ defines $(u,w)$ as an implicit function of $(\alpha_1,\alpha_2,\kappa)$. 
By \eqref{eq:solmapderiv}, the Jacobian of the function $\hat R_{\gamma,\epsilon}: (\alpha_1,\alpha_2,\kappa) \mapsto (u,w)$ is given by
\begin{equation}
\frac{\partial(u,w)}{\partial(\alpha_1,\alpha_2,\kappa)}=-\left(\frac{\partial S}{\partial(u,w)}\right)^{-1}\frac{\partial S}{\partial(\alpha_1,\alpha_2,\kappa)}\label{eq:ictvtvsolmapderiv}
\end{equation}
Calculating all the partial derivatives and using \eqref{eq:r-hat-r-grad}, we obtain \eqref{eq:tvtvderivative}.
\end{proof}


\subsection{Derivative of the IC L\textsuperscript{2}TV solution map}
\label{eq:icl2tv-theory}

Again we have a problem of the form
\begin{equation}
    \notag
    R(\alpha_1,\alpha_2,\kappa)=P_1 \hat R(\alpha_1, \alpha_2, \kappa)
\end{equation}
for $P_1(u, w) \defeq u$, and
\begin{equation}
    \label{eq:l2tvsolmap}
    \hat R(\alpha_1,\alpha_2,\kappa)=\argmin_{(u,w)}~ 
   \frac{1}{2}\|u-g\|_2^2+\frac{\alpha_1}{2} \|\nabla_\kappa(u-w)\|_{2,1}^2+\alpha_2\|\nabla_{1-\kappa}(w)\|_{2,1}.
\end{equation}
To employ the formula \eqref{eq:solmapderiv}, we need to regularise $\hat R(\alpha_1,\alpha_2,\kappa)$ by replacing it with
\begin{equation}
    \label{eq:l2tvsolmap-reg}
    \hat R_{\gamma,\epsilon}(\alpha_1,\alpha_2,\kappa)=\argmin_{u,w}~  J_{\gamma,\epsilon}(u,w,\alpha_1,\alpha_2,\kappa)
\end{equation}
for
\[
    J_{\gamma,\epsilon}(u,w,\alpha_1,\alpha_2,\kappa)
    \defeq
    \frac{1}{2}\|u-g\|_2^2+\frac{\alpha_1}{2} \|\nabla_\kappa(u-w)\|_2^2
    +\alpha_2\Theta(\nabla_{1-\kappa}(w))+\frac{1}{2} \left(\int_\Omega w \,dx\right)^2
\]
Similar to Section \ref{sec:ictvtv-theory}, we obtain the following result.

\begin{proposition}[Derivative of the IC L$^2$TV solution map]
    Let $\hat R_{\gamma,\epsilon}$ be defined by \eqref{eq:l2tvsolmap-reg} for some $\gamma,\epsilon>0$, and
    \[
        R(\alpha_1,\alpha_2,\kappa) \defeq P_1 \hat R_{\gamma,\epsilon}(\alpha_1, \alpha_2, \kappa).
    \]
    Then
    \begin{subequations}
    \begin{equation}
        \grad R(\alpha_1,\alpha_2,\kappa)
        = -P_1 \left(\frac{\partial S}{\partial(u,w)}\right)^{-1}\frac{\partial S}{\partial(\alpha_1,\alpha_2,\kappa)},
    \end{equation}
    where $S=\grad_{(u,w)} J_{\gamma,\epsilon}$ satisfies
    \label{eq:l2tvderivative}
    \begin{align}
        \frac{\partial S}{\partial(u,w)}& \defeq Q^* \begin{pmatrix}I- \alpha_1 \mathrm{div}_{\kappa}\nabla_\kappa & I\\ I & I-\alpha_2 \mathrm{div}_{1-\kappa} \Psi^2_{1-\kappa}(w) \nabla_{1-\kappa}+c \times c\end{pmatrix}Q
        \label{eq:l2tvderivativematrix}
        \\
        \frac{\partial S}{\partial(\alpha_1,\alpha_2,\kappa)}& \defeq Q^* \begin{pmatrix}-\mathrm{div}_{\kappa}\nabla_\kappa (u-w) & 0 & T_1
        \\
        0 & -\mathrm{div}_{1-\kappa} \Psi^1_{1-\kappa}(w) & T_2 \end{pmatrix}
        \label{eq:l2tvderivativevector}
    \end{align}
    where
    \begin{align}
        T_2 & \defeq  -\alpha_2\left(\frac{\partial}{\partial t^*}-\mathrm{div}\right)\Psi^1_{1-\kappa}(w) - \alpha_2 \mathrm{div}_{1-\kappa} \Psi^2_{1-\kappa}(w) \left(\frac{\partial}{\partial t}-\nabla\right)w,
        \\
        T_2 & \defeq
        -\alpha_1\left(\mathrm{div}-\frac{\partial}{\partial t^*}\right)\nabla_\kappa (u-w) - \alpha_1 \mathrm{div}_{\kappa} \left(\nabla-\frac{\partial}{\partial t}\right)(u-w).
    \end{align}
    \end{subequations}
\end{proposition}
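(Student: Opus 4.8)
The plan is to mirror the proof of the IC TVTV case almost verbatim, the only structural change being that the first regulariser is now a smooth quadratic $\frac{\alpha_1}{2}\|\nabla_\kappa(u-w)\|_2^2$ rather than a Huber-regularised total variation, so its second derivative in the relevant direction is simply $\alpha_1 \mathrm{div}_\kappa\nabla_\kappa$ with no $\Psi^2$ weighting. First I would write down the Euler--Lagrange equation $S(u,w,\alpha_1,\alpha_2,\kappa)=0$ for the regularised functional $J_{\gamma,\epsilon}$ in \eqref{eq:l2tvsolmap-reg}, splitting it as before into
\begin{align*}
S_1 &\defeq u-g-\alpha_1\mathrm{div}_\kappa\nabla_\kappa(u-w)=0, \\
S_2 &\defeq \alpha_1\mathrm{div}_\kappa\nabla_\kappa(u-w)-\alpha_2\mathrm{div}_{1-\kappa}\Psi^1_{1-\kappa}(w)+\iprod{c}{w}\,c=0,
\end{align*}
using that the gradient of $\frac{\alpha_1}{2}\|\nabla_\kappa v\|_2^2$ with respect to $v$ is $-\alpha_1\mathrm{div}_\kappa\nabla_\kappa v$. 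This is exactly the IC TVTV system with $\Psi^1_\kappa(u-w)$ replaced by $\nabla_\kappa(u-w)$ and $\Psi^2_\kappa(u-w)$ replaced by the identity.

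Next I would verify the hypotheses needed to invoke \eqref{eq:solmapderiv}: existence and uniqueness of the minimiser, strict differentiability of $S$, and invertibility of $\grad_{(u,w)}S$. Existence follows from Proposition~\ref{prop:existence} together with Remark~\ref{rem:epsilon-alpha} exactly as noted for the IC TVTV case, since for $\kappa\notin\{0,1\}$ the term $\frac{\alpha_1}{2}\|\nabla_\kappa(u-w)\|_2^2 + \alpha_2\Theta(\nabla_{1-\kappa}w)$ again dominates $\epsilon'\|\nabla u\|_2^2$ for a suitable $\epsilon'=\epsilon'(\kappa,\alpha_1,\alpha_2,\epsilon)$, while the $\bigl(\int_\Omega w\,dx\bigr)^2$ term kills the remaining translational degree of freedom in $w$; uniqueness is strict convexity. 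Strict differentiability of $S$ holds because the quadratic term contributes a bounded linear map and the Huber term $\Theta$ is $C^1$ with Lipschitz derivative (this is the role of $\gamma>0$). Invertibility of $\grad_{(u,w)}S$ is where the $\epsilon>0$ elliptic term is used, as in the general discussion following \eqref{eq:solmapderiv}; one checks the operator \eqref{eq:l2tvderivativematrix} is self-adjoint and positive definite on the quotient by constants, hence boundedly invertible.

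Then it is a matter of computing the partial derivatives. Differentiating $S$ in $(u,w)$ and conjugating by $Q=\left(\begin{smallmatrix}I&-I\\0&I\end{smallmatrix}\right)$ gives \eqref{eq:l2tvderivativematrix}: the $(1,1)$ block becomes $I-\alpha_1\mathrm{div}_\kappa\nabla_\kappa$, the $(2,2)$ block is unchanged from the IC TVTV formula, and the off-diagonal coupling through $Q$ produces the $I$ entries. Differentiating $S$ in $(\alpha_1,\alpha_2,\kappa)$ yields \eqref{eq:l2tvderivativevector}; the $\alpha_1$-column entry is $-\mathrm{div}_\kappa\nabla_\kappa(u-w)$ and the $\alpha_2$-column is $-\mathrm{div}_{1-\kappa}\Psi^1_{1-\kappa}(w)$, exactly as for IC TVTV. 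The only genuinely new computation is the $\kappa$-column, obtained by differentiating $\nabla_\kappa=(\kappa\partial_x,\kappa\partial_y,(1-\kappa)\partial_t)$ and $\mathrm{div}_\kappa$ with respect to $\kappa$; since $\frac{d}{d\kappa}\nabla_\kappa = \nabla-\frac{\partial}{\partial t}$ (in the sense of replacing the $\kappa$ and $1-\kappa$ weights by $1$ and $-1$ respectively) and likewise for $\mathrm{div}_\kappa$, applying the chain and product rules to $-\alpha_1\mathrm{div}_\kappa\nabla_\kappa(u-w)$ and $-\alpha_2\mathrm{div}_{1-\kappa}\Psi^1_{1-\kappa}(w)$ gives $T_1$ and $T_2$ respectively, noting that $\nabla_{1-\kappa}$ picks up an extra sign so that $\frac{d}{d\kappa}\nabla_{1-\kappa}=\frac{\partial}{\partial t}-\nabla$. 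Finally I would assemble the pieces via \eqref{eq:solmapderiv} and \eqref{eq:r-hat-r-grad}, projecting with $P_1$ to recover $\grad R$. The main obstacle is purely bookkeeping: keeping the $\kappa$-derivatives of the anisotropic operators and their adjoints consistent (in particular the sign flip between $\nabla_\kappa$ and $\nabla_{1-\kappa}$ and between $\mathrm{div}$ and $\partial/\partial t^*$), so that the product-rule expansion of the second-order term $\mathrm{div}_\kappa\nabla_\kappa$ lands in the stated form for $T_1$.
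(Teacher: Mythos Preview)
Your proposal is correct and follows the same route as the paper: write down the Euler--Lagrange system $S=(S_1,S_2)=0$ for $J_{\gamma,\epsilon}$, invoke the implicit function theorem formula \eqref{eq:solmapderiv}, and compute the partial derivatives in $(u,w)$ and $(\alpha_1,\alpha_2,\kappa)$, noting that the only change from the IC TVTV case is the replacement $\Psi^1_\kappa(u-w)\rightsquigarrow\nabla_\kappa(u-w)$ and $\Psi^2_\kappa(u-w)\rightsquigarrow I$. The paper's proof is in fact terser than yours---it simply records $S_1,S_2$ and says ``calculate the partial derivatives''---so your additional verification of existence, uniqueness, strict differentiability, and invertibility is extra detail rather than a different argument.
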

Here $Q$ and $c$ are as in Section \ref{sec:ictvtv-theory}. 

\begin{proof}
As in Section \ref{sec:ictvtv-theory}, we find the optimal conditions for the minimization problem in \eqref{eq:l2tvsolmap-reg} are
\begin{subequations}
\begin{align}
S_1& \defeq u-f-\alpha_1 \mathrm{div}_{\kappa} \nabla_\kappa (u-w)=0\\
S_2& \defeq \alpha_1 \mathrm{div}_{\kappa} \nabla_\kappa (u-w)-\alpha_2 \mathrm{div}_{1-\kappa} \Psi^1_{1-\kappa}(w) +\int_\Omega w \,dx=0\label{eq:l2tvS2_definition}
\end{align}
\end{subequations}
The Jacobian of the function $R_{\gamma,\epsilon}: (\alpha_1,\alpha_2,\kappa)\to u$ is again given by
\begin{equation}
\frac{\partial u}{\partial(\alpha_1,\alpha_2,\kappa)}=-P_1 \left(\frac{\partial S}{\partial(u,w)}\right)^{-1}\frac{\partial S}{\partial(\alpha_1,\alpha_2,\kappa)}.\label{eq:icl2tvsolmapderiv}
\end{equation}
We thus calculate the partial derivatives with respect to all the variables to obtain \eqref{eq:l2tvderivative}.
\end{proof}

\section{Computational realisation}

The upper level problem of the bilevel optimisation \eqref{eq:bilevopt} is solved numerically via the BFGS algorithm with backtracking line search and curvature verification, where we update the quasi-Hessian only if it remains positive semi-definite. The following Armijo condition
\begin{equation}
F\left(\bm{\alpha}+\sigma \bm{d}\right)-F\left(\bm{\alpha}\right)\leq \sigma c\, \nabla F \left(\bm{\alpha}\right) \cdot \bm{d}\label{eq:armijocond}
\end{equation}
is used, where $F(\bm{\alpha}) = \frac{1}{2} \| R(\bm{\alpha}) - g \|_2^2$, $\bm{d}$ is the search direction, $\sigma$ the step-length and $c$ a positive constant. The relative residual is used as a stopping criterion, so that the algorithm terminates if
\begin{equation}
\|\bm{\alpha}_i-\bm{\alpha}_{i-1}\|_2<\rho \,\|\bm{\alpha}_i\|_2\label{eq:bfgsstopcrit}
\end{equation}
is satisfied for a fixed, positive parameter $\rho$. As all models have to be differentiable in order to solve the upper level problem, we used the $L^2$ norm $\frac{\epsilon}{2} \|\cdot\|_2^2$ with $\epsilon = 10^{-8}$ and the Huberised $L^1$ norm $\|\cdot\|_\gamma$ with $\gamma=0.01$ as our cost functionals in order to optimise the parameters $(\alpha_1, \alpha_2, \kappa)$ for the regularisers in Table \ref{tab:dynreg}. We have further used MATLAB's inbuilt \texttt{gmres} function with a diagonally compensated incomplete Cholesky preconditioner in order to solve \eqref{eq:ictvtvsolmapderiv} and \eqref{eq:icl2tvsolmapderiv}, respectively.

To compute numerical solutions $u$ (and $w$) of the lower-level problem for fixed $\bm{\alpha}$, the lower level problem of the bilevel optimisation \eqref{eq:bilevopt} is solved via the primal-dual hybrid gradient method (PDHGM) as presented in \cite{chambolle2011first}. In order to apply the PDHGM to the lower level problem, we need to recast it into a saddle-point formulation. In case of \eqref{eq:icl2tv} for instance, this saddle-point formulation reads as
\begin{equation}
\min_{(u,w)}{\max_{(p,q)}{\frac{1}{2}\|u-g\|_2^2+\left\langle A\left(\begin{smallmatrix}u\\w \end{smallmatrix}\right),\left(\begin{smallmatrix}p\\q \end{smallmatrix}\right)\right\rangle-{\textstyle \frac{1}{2 \alpha_1}}\|p\|_2^2-\delta_{\alpha_2 P}(q)}},
\end{equation}
where $P=\left\{p \ | \ \| p \|_{2, \infty} \leq 1 \right\}$ is the unit ball with respect to the supremum norm and $A$ is a linear operator given by
\begin{equation}
A=\begin{pmatrix}\nabla_{\kappa} & -\nabla_{\kappa} \\ 0 & \nabla_{1-\kappa}\end{pmatrix} \, \text{.}
\end{equation}
To determine the step sizes in the PDHGM, we need to find a bound on $\|A\|$. Assuming ${\kappa\in [0,1]}$, it is easy to show that $\|A\|^2<24$. The other formulations can be cast to saddle-point formulations in the same fashion.

\noindent In order to discretise $\nabla_\kappa$ and $\nabla_{1 - \kappa}$, we simply use forward finite differences.


Note that the parameters $\epsilon,\gamma$ in \eqref{eq:theta} are numerical regularisation parameters only. As we run a relatively small number of PDHGM iterations to solve each inner problem, the solutions will be numerically inaccurate. This therefore allows us to ignore $\gamma$ and $\epsilon$ in the inner denoising problem, and to solve the original non-smooth problem instead. In the outer problem, we further do not restrict $\kappa \in [0, 1]$, as this is not strictly necessary. For reporting the results in a uniform fashion, we use the identity
\begin{equation}
\|\nabla_\kappa v\|_1=|2\kappa-1|\, \|\nabla_{\frac{\kappa}{2\kappa-1}} v\|_1,
\label{eq:negative-trick}
\end{equation}
which holds for the unsmoothed inner problems. Therefore every triple $(\alpha_1,\alpha_2,\kappa)$ with $\kappa\notin [0,1]$ corresponds to a triple with $\kappa\in (0,1)$.
\section{Numerical results}


For the implementation of the BFGS scheme we use the following numerical setup. The constant $c$ in \eqref{eq:armijocond} is set to $c=10^{-4}$ throughout all experiments, whereas we use $\rho = 10^{-8}$ in \eqref{eq:bfgsstopcrit}. In all cases the parameters $\alpha_1$ and $\alpha_2$ were constrained to lie in $(10^{-5},100)$, and $\kappa$ was constrained to lie in $(-50,50)$. We ran BFGS with 100 different initialisations of the parameters $\bm{\alpha}$ drawn from a 3rd order $\chi^2$-squared distribution with mean rescaled to the values $(0.15, 0.15)$ for TVTV and $(3.9,0.15)$ for $L^2$TV, obtained by previous experimentation. The $\chi^2$-squared distribution is used in order to force the sampled parameters to be positive, but to not impose an upper bound.
We report the afterwards optimised parameter  $\bm{\alpha}$ for which we obtained the best PSNR values for visual and quantitative comparison in the following figures and tables.

%

In order to solve the lower level problem, the PDHGM is run with a fixed number of iterations - 50 iterations in case the accelerated variant is applicable, otherwise 200 iterations, respectively. These were applied to the non-relaxed variants of the regularisers, as this allows us to leave $\kappa$ unconstrained (see \eqref{eq:negative-trick}). Each iteration uses warm initialisation with the optimal solution from the previous iteration. 

We use three different $2 + 1D$ video sequences for our denoising experiments: the sequences 'hand', 'flight' and 'harlem shake'. The sequence 'hand', showing a hand falling onto a table, is of grid-size $54 \times 96$ and consists of 65 frames. It contains steady objects (like the table) and a moving object (the hand) that, at first not seen, moves onto the table where it becomes a steady object for the rest of the scene. The camera is fixed to a specific position.
The sequence 'flight' is filmed from a flying glider. Here the background scenery passes by, while also the camera is at movement. The movie has grid-size $96 \times 54$ and consists of 90 frames. The third sequence 'harlem shake' is taken from \url{https://www.youtube.com/watch?v=-_ZG2xgNAr4}. The original RGB video has grid-size $540 \times 720$ and consists of 711 frames. It has been converted to gray-scale double precision, and down-sampled to grid-size $70 \times 93$ and 73 frames in order to make processing in a reasonable amount of time possible. The video shows a room inside a lodge in which the majority of people are in a rather steady position, whereas one person is dancing (and therefore moving). Approximately half-way through the video the scene changes, and everyone is at movement. As in case of the video 'hand', the camera is in a fixed position.

For all videos the underlying mesh-sizes are considered to be $h = 1$ in each dimension. Further, noisy versions of the video sequences have been created by perturbing the original sequences with Gau\ss ian noise with mean zero and variance $\sigma^2=0.02$ respectively.

\subsection{Discussion of results}
We want to start discussing the results starting with the video 'hands'. Figure \ref{subfig:hand1} shows six frames that are selected  from the original sequence at the times displayed, and its noisy counterparts for variance $\sigma^2 = 0.02$. We clearly see the features described in the previous section, starting with a relatively steady scene and a moving hand emerging half-way through. Figure \ref{subfig:hand1} shows the spatial and temporal components of the TV-TV reconstruction (\eqref{eq:bilevopt-smooth} with \eqref{eq:ictvtv} as regulariser) with optimal parameters that are given in Table \ref{table:hand}, as well as the sum of both components. Note that we define the temporal component as $u$ if $\kappa \le 0.5$, and $u-w$ otherwise. We observe that in particular for the steady parts of the scene a lot of the noise has been removed. Most of the remaining artefacts seem to be present in the moving object, the hand. The temporal component seems to mostly contain the moving parts of the hand, whereas the spatial component contains a rather piecewise constant transition from desk without to desk with hand.
\begin{table}
    \centering
    \caption{``Hand'' test video optimal results. The star $^*$ in the optimal parameter means that the original $\kappa$ for the optimal result was outside the range $[0, 1]$, and the conversion \eqref{eq:negative-trick} has been used to derive the presented values.}
    \label{table:hand}
    \begin{tabular}{r|r|r|r|r}
        Model & $(\alpha_1, \alpha_2, \kappa)$ & Opt.~value & PSNR & SSIM \\
        \hline
        TVTV & (0.162, 0.0844, 0.0466)$^*$ & 104.5 & 32.08 & 0.9271 \\
        L2TV & (9.85, 0.0674, 0.0529)$^*$ & 127.5 & 31.22 & 0.9125 \\
    \end{tabular}
\end{table}
Figure \ref{subfig:hand2} on the other hand shows the result of the numerical reconstruction of \eqref{eq:bilevopt-smooth} with \eqref{eq:icl2tv} as a regulariser, for optimal parameters that are also given in Table \ref{table:hand}. The results are similar to the TV-TV case; however, the spatial component shows a much smoother transition and therefore picks up the hand much earlier than TV-TV does. This also explains the quality-measure results in Table \ref{table:hand}, as those tell us that TV-TV outperforms $L_2$-TV for this sequence both in terms of PSNR and SSIM.

\newlength{\iw}
\newlength{\iwm}
\newlength{\ih}
\newlength{\stripw}
\newlength{\dotd}
\newlength{\dotw}
\newlength{\doth}
\newlength{\up}
\newlength{\down}
\setlength{\dotd}{3ex}
\setlength{\dotw}{1ex}
\setlength{\doth}{0.67\dotw}
\newcount{\numdots}

\newcommand{\strip}[6]{
    \pgfmathsetlength{\stripw}{\numi*\iwm+\iwm-\iw};
    \pgfmathsetlength{\up}{#4+\ih/2+1ex};
    \pgfmathsetlength{\down}{#4-\ih/2-1ex};
    \fill[color=black,above,right] (-1pt, \up) rectangle (\stripw, \down);
    \pgfmathsetcount{\numdots}{floor(\stripw/\dotd)};
    \foreach \i in {0,...,\numdots} {
        \fill[color=white,below,right] (\i*\dotd+3pt, \up-1.5pt) rectangle (\i*\dotd+\dotw+3pt, \up-\doth-1.5pt);
        \fill[color=white,below,right] (\i*\dotd+3pt, \down+1.5pt) rectangle (\i*\dotd+\dotw+3pt, \down+\doth+1.5pt);
    }
    \pgfkeys{/pgf/number format/precision=1}
    \foreach \x/\frame in #3 {
        \pgfmathsetmacro{\xminusone}{\x-1}
        \node[rotate=90] at (-2ex,#4) {#5};
        \node[below,right] at (\xminusone*\iwm, #4) {\includegraphics[width=\iw]{{#1_frame\frame}.png}};
        \def\temp{#6}
        \ifx\temp\empty\else
            \pgfmathsetmacro{\frametime}{\frame/#6}
            \node[below,right,color=orange!70!white] at (\xminusone*\iwm, #4-0.5\ih+2.1ex) {\pgfmathprintnumber[fixed,zerofill,precision=2]{\frametime}s};
        \fi
    }%
}

\def\aspect{0.667}

\newcommand{\setupvideo}[3]{%
    \pgfmathsetmacro{\numi}{#2}
    \pgfmathsetlength{\iw}{\textwidth/\numi}%
    \pgfmathsetlength{\iwm}{\iw+2pt}%
    \pgfmathsetlength{\ih}{\aspect\iw}%
}

\newcommand{\mkvideo}[4]{%
    \setupvideo{#1}{#2}{#3}%
    \hspace{-3ex}%
    {%
    \scriptsize%
    \begin{tikzpicture}%
        \strip{#1_u}{#2}{#3}{0}{Reconstr.}{}%
        \strip{#1_utem}{#2}{#3}{-\ih-3ex}{Temporal}{}%
        \strip{#1_uspa}{#2}{#3}{-2\ih-6ex}{Spatial}{}%
    \end{tikzpicture}%
    }%
}

\newcommand{\mkvideoorig}[4]{%
    \setupvideo{#1}{#2}{#3}
    \hspace{-3ex}%
    {%
    \scriptsize%
    \begin{tikzpicture}%
        \strip{#1_orig}{#2}{#3}{0}{Original}{#4}%
        \strip{#1_noisy}{#2}{#3}{-\ih-3ex}{Noisy}{}%
    \end{tikzpicture}%
    }%
}

\begin{figure}
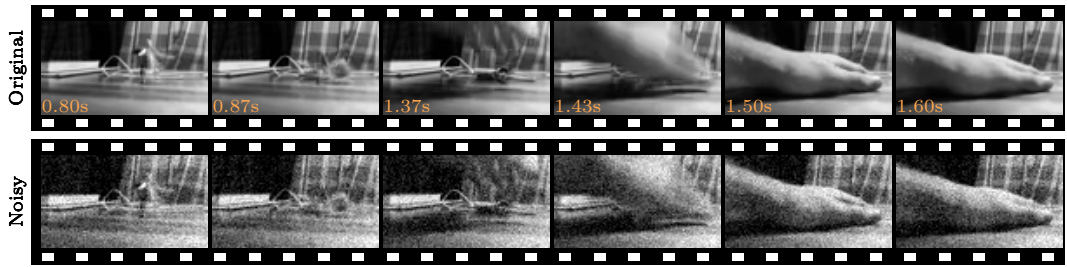
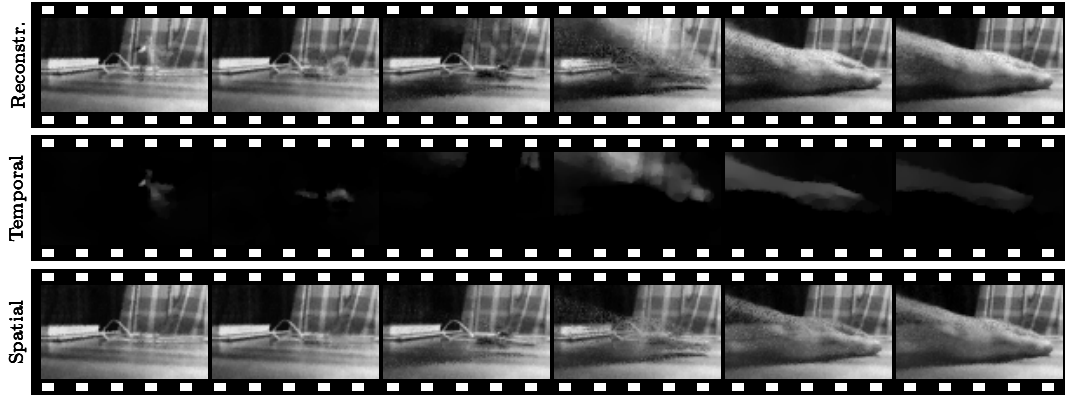
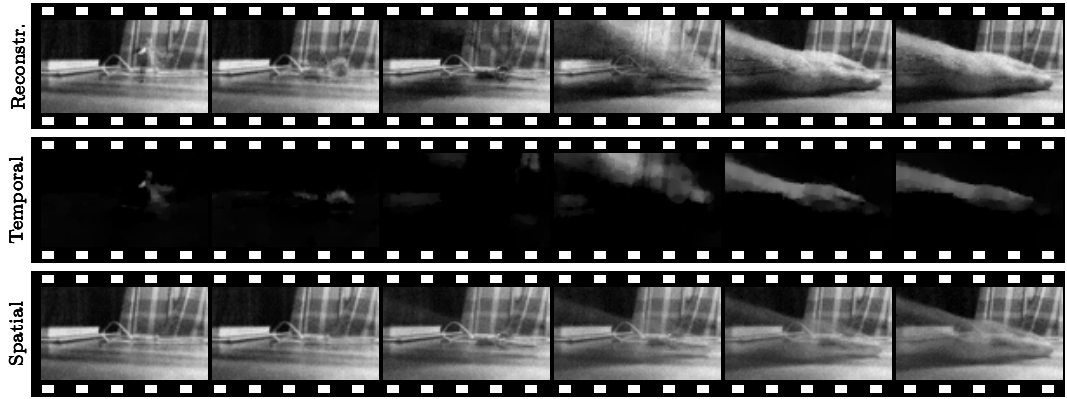
%
    \def\framespec{1/24,2/26,3/41,4/43,5/45,6/48}%
    \begin{subfigure}{\textwidth}%
        \mkvideoorig{resimg/res_fixed_Training_video_4_tiny}{6}{\framespec}{30}%
        \caption{Original and noisy videos}%
        \label{subfig:hand1}
    \end{subfigure}%
    \\
    \begin{subfigure}{\textwidth}%
        \mkvideo{resimg/res_fixed_Training_video_4_tiny_TVTV}{6}{\framespec}{30}%
        \caption{TVTV reconstruction}%
         \label{subfig:hand2}
    \end{subfigure}%
    \\%
    \begin{subfigure}{\textwidth}%
        \mkvideo{resimg/res_fixed_Training_video_4_tiny_L2TV}{6}{\framespec}{30}%
        \caption{$L^2$TV reconstruction}%
         \label{subfig:hand3}
    \end{subfigure}%
    \caption{``Hand'' test video reconstructions with optimally learned parameters using \eqref{eq:bilevopt-smooth} for the ICTVTV and IC$L^2$TV regularisation models. The reconstructions are depicted together with their temporal and spatial components. Note how $L^2$TV picks up the hand in the spatial component much earlier than TVTV.}
    \label{fig:hand}
\end{figure}

\begin{figure}
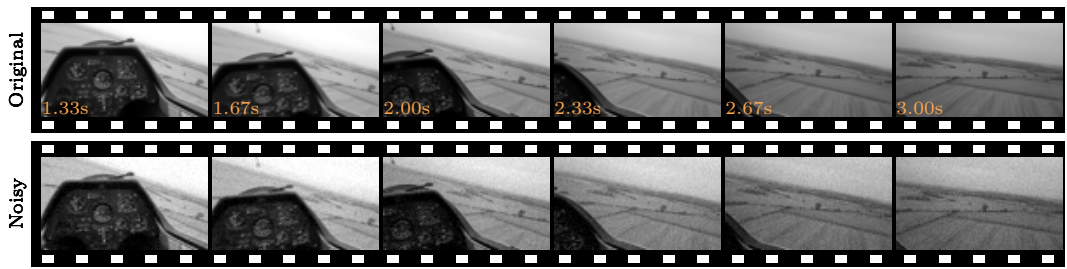
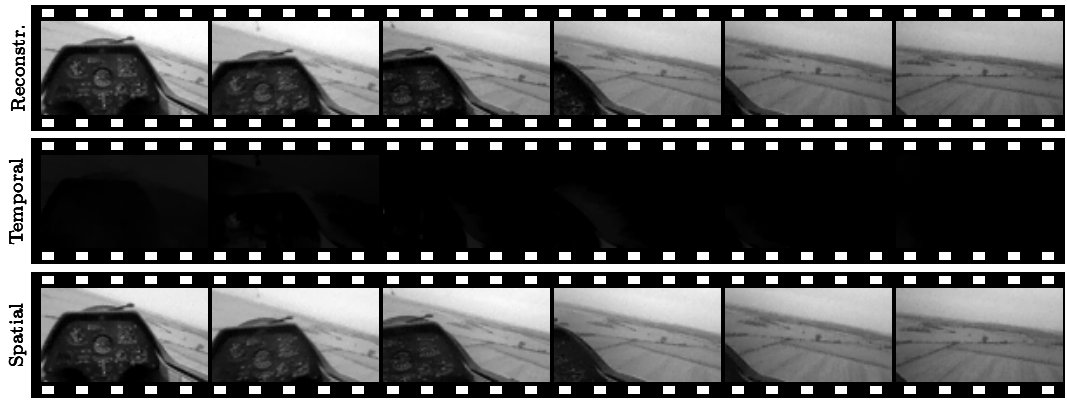
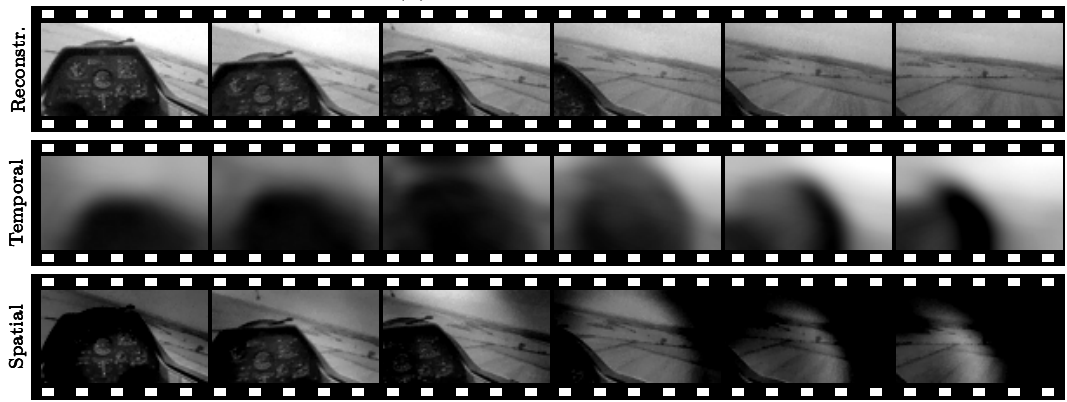
%
    \def\framespec{1/40,2/50,3/60,4/70,5/80,6/90}
    \begin{subfigure}{\textwidth}%
        \mkvideoorig{resimg/res_0.02_Flight}{6}{\framespec}{30}%
        \caption{Original and noisy videos}%
        \label{subfig:flight1}
    \end{subfigure}%
    \\
    \begin{subfigure}{\textwidth}
        \mkvideo{resimg/res_0.02_Flight_TVTV}{6}{\framespec}{30}%
        \caption{TVTV reconstruction}%
		\label{subfig:flight2}
    \end{subfigure}%
    \\
    \begin{subfigure}{\textwidth}%
        \mkvideo{resimg/res_0.02_Flight_L2TV}{6}{\framespec}{30}%
        \caption{$L^2$TV reconstruction}%
        \label{subfig:flight3}
    \end{subfigure}%
    \caption{``Flight'' test video reconstructions with optimally learned parameters using \eqref{eq:bilevopt-smooth} for the ICTVTV and IC$L^2$TV regularisation models. The reconstructions are depicted together with their temporal and spatial components. Note how TVTV manages to extract no temporal component, while $L^2$TV manages to extract the spatially stable rough features in the temporal component. Recall that we define the temporal component as $u$ if $\kappa \le 0.5$, and $u-w$ otherwise. Since $\kappa$ is approximately 0.5, see Table \ref{table:flight}, it can be argued that the nomenclature ``spatial'' and ``temporal'' components should be swapped for $L^2$TV, and the spatial component should be the blurry one, corresponding to the very approximate spatially constant information.}
    \label{fig:flight}
\end{figure}

\begin{figure}
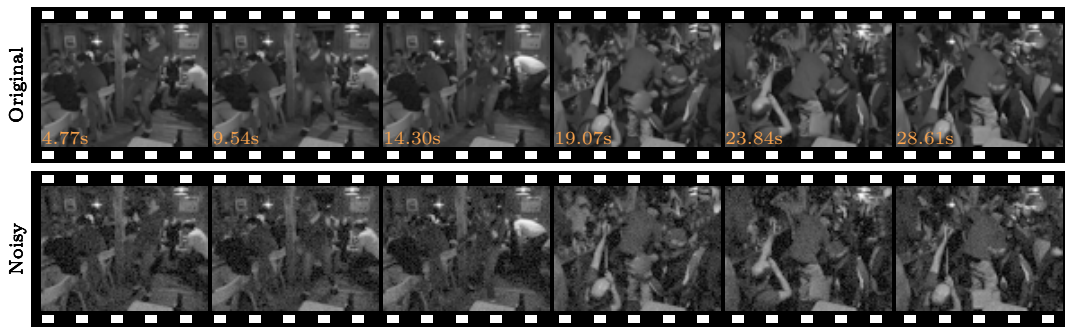
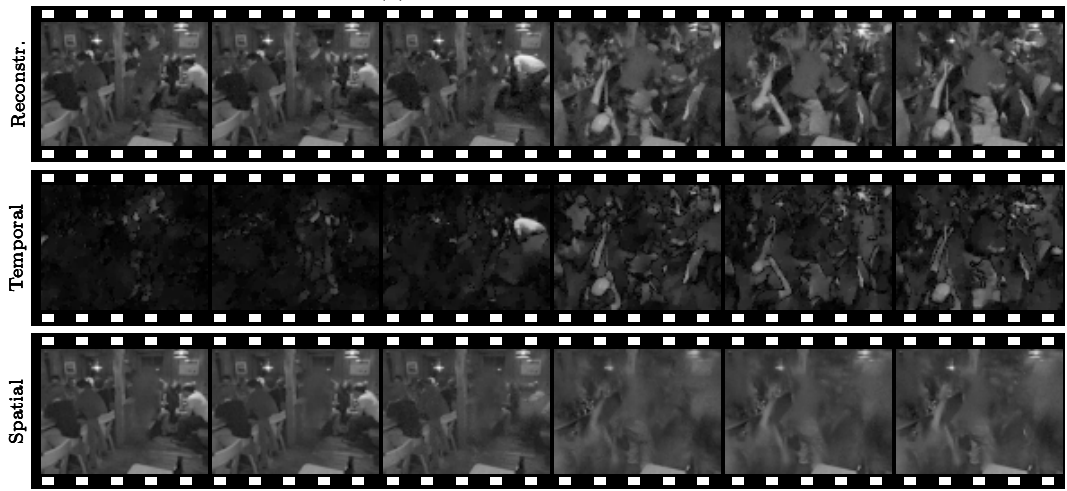
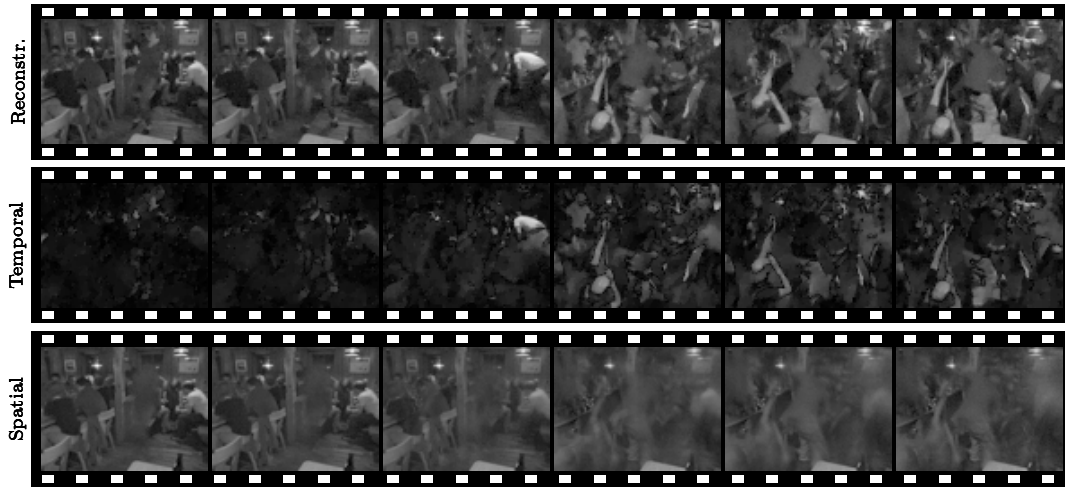
%
    \def\aspect{0.85}
        \def\framespec{1/12,2/24,3/36,4/48,5/60,6/72}
    \begin{subfigure}{\textwidth}%
        \mkvideoorig{resimg/res_hshake}{6}{\framespec}{2.517}%
        \caption{Original and noisy videos}%
    \end{subfigure}%
    \\
    \begin{subfigure}{\textwidth}
        \mkvideo{resimg/res_hshake_TVTV}{6}{\framespec}{2.517}%
        \caption{TVTV reconstruction}%
    \end{subfigure}%
    \\
    \begin{subfigure}{\textwidth}%
        \mkvideo{resimg/res_hshake_L2TV}{6}{\framespec}{2.517}%
        \caption{$L^2$TV reconstruction}%
    \end{subfigure}%
    \caption{``Harlem shake'' video reconstructions with optimally learned parameters using \eqref{eq:bilevopt-smooth} for the ICTVTV and IC$L^2$TV regularisation models. The reconstructions are depicted together with their temporal and spatial components. The displayed images have been gamma-corrected with factor $\gamma=0.6$ to improve legibility on paper.}
    \label{fig:harlemshake}
\end{figure}

For the next sequence, 'flight', the situation looks quite similar in terms of PSNR and SSIM, however, the components are very different in this case. Figure \ref{subfig:flight1} shows six consecutive frames of the original sequence, and the same frames from the sequence contaminated with noise. Figure \ref{subfig:flight2} shows the reconstructions with \eqref{eq:ictvtv} as a regulariser. Clearly, the temporal component in this case fails to pick up any information about the sequence, whereas all the information is stored in the spatial component. In case of \eqref{eq:icl2tv}, the optimal value for $\kappa$ is approximately 0.5 according to Table \ref{table:flight}. Hence, we can hardly speak of a temporal and a spatial component in this case, but rather of an $L^2$- and a TV-penalised component that are shown in Figure \ref{subfig:flight3}. The first component is rather blurry, and more or less approximating the homogeneous sky and a blurred version of the cockpit. The second component on the other hand approximates the sharp features and structures, but not the homogeneous background parts.

\begin{table}
    \centering
    \caption{``Flight'' test video optimal results. The star $^*$ in the optimal parameter means that the original $\kappa$ for the optimal result was outside the range $[0, 1]$, and the conversion \eqref{eq:negative-trick} has been used to derive the presented values.}
    \label{table:flight}
    \begin{tabular}{r|r|r|r|r}
        Model & $(\alpha_1, \alpha_2, \kappa)$ & Opt.~value & PSNR & SSIM \\
        \hline
        TVTV & (0.0141, 0.017, 0.337)$^*$ & 46.41 & 36.91 & 0.9569 \\
        L2TV & (4.81, 0.0163, 0.542) & 46.87 & 32.84 & 0.942 \\
    \end{tabular}
\end{table}

For the last sequence shown in Figure \ref{fig:harlemshake}, Harlem shake, we figure out that both \eqref{eq:ictvtv} and \eqref{eq:icl2tv} seem to perform almost equally well, indicating a small value of $\kappa$ which is confirmed by Table \ref{table:harlemshake}. Given the nature of the scene, the temporal part captures most of the scene, as there are only very few pixels that remain (almost) unchanged over time. Some of those are captured in the spatial component, like parts of the table in the background on the left hand side for instance.

\begin{table}
    \centering
    \caption{``Harlem shake'' test video optimal results. The star $^*$ in the optimal parameter means that the original $\kappa$ for the optimal result was outside the range $[0, 1]$, and the conversion \eqref{eq:negative-trick} has been used to derive the presented values.}
    \begin{tabular}{r|r|r|r|r}
        Model & $(\alpha_1, \alpha_2, \kappa)$ & Opt.~value & PSNR & SSIM \\
        \hline
        TVTV & (0.0649, 0.0122, 0.0319) & 39.52 & 37.67 & 0.9621 \\
        L2TV & (23, 0.0114, 0.0265) & 41.02 & 37.53 & 0.9604 \\
    \end{tabular}
    \label{table:harlemshake}
\end{table}

\section{Conclusions \& Outlook}
We have presented a bilevel optimisation strategy for optimising the regularisation parameters of infimal convolution-type regularisation methods for dynamic image regularisation. We have shown existence of solutions under additional regularity assumptions, and demonstrated the numerical performance of the proposed method for three distinctive video sequences. The sequences considered allow the assumption, that the use of the infimal convolution models for video denoising highly depends on the corresponding video sequence. As for sequences with stationary backgrounds, the decomposition into spatial and temporal components seems to work well, with the optimal $\kappa$ being close to 0 or 1 allowing for a clear distinction of a temporal and a spatial component. For sequences like the 'flight'-sequence that lack stationary parts, the distinction is not clear at all, which is also underpinned by the optimal $\kappa$ value being close to 0.5. 
In this setup the modelling assumption of multiple infimal convolutions of the same functional also breaks down, as they will be the same. A choice of $\kappa$ close to 0.5 only makes sense for infimal convolutions of functionals that promote very different information (like the $L^2$TV model in our case).
%

Future research should address different error measures for the comparison of the denoised video sequences to the ground truth, in order to see, if different error measures will lead to similar or different conclusions. Further can the proposed research be easily extended to general, bounded linear operators $K$, which has been omitted here for the sake of brevity. Research on infimal convolutions of different, complementary regularisation functionals seems to be another promising direction that future research can head for.

\section*{Acknowledgements}

M.~Benning, C.-B.~Sch\"{o}nlieb and T.~Valkonen acknowledge support from the {EPSRC} grant Nr.~EP/M00483X/1 and from the Leverhulme grant `Breaking the non-convexity barrier'. V.~Vla\v{c}i\'{c} was supported by a Bridgwater summer internship.

\section*{A data statement for the EPSRC}

{\color{red}The code and data will be put into a repository as the final version is submitted.}

\section*{References}

\bibliographystyle{iopart-num}

\bibliography{abbrevs,dynreg,bib,bib-own}


\end{document}